\documentclass[12pt]{amsart}

\usepackage{graphicx}
\usepackage{amsmath,amsthm}
\usepackage{amsfonts}
\usepackage{amssymb}
\usepackage{a4wide}
\bibliographystyle{plain}
\title[]{On the Kurosh problem for algebras over a general field}

\author{Jason P.~Bell}
\thanks{The first-named author was supported by NSERC grant 31-611456.}

\subjclass[2000]{}

\address{Jason Bell\\
Department of Mathematics\\
Simon Fraser University\\
Burnaby, BC V5A 1S6\\
Canada}

\email{jpb@math.sfu.ca}

\author{Alexander A. Young}
\thanks{The research of the second-named author was partially supported by the United States National Science Foundation.}
\address{Alexander Young\\
Department of Mathematics\\
University of California at San Diego\\
9500 Gilman Drive\\
La Jolla, CA 92093-0112\\
USA}

\email{aayoung@math.ucsd.edu}

\subjclass[2000]{16N40, 16P90}
\keywords{Kurosh's problem, nil algebras, algebraic algebras, growth, subexponential growth.}

\bibliographystyle{plain}


\newtheorem{thm}{Theorem}[section]
\newtheorem{lem}[thm]{Lemma}

\newtheorem{prop}[thm]{Proposition}
\newtheorem{cor}[thm]{Corollary}

\theoremstyle{definition}
\newtheorem{defn}[thm]{Definition}

\newtheorem{quest}{Question}

\newtheorem{notn}[thm]{Notation}

\bibliographystyle{plain}
\begin{document}

\begin{abstract} Smoktunowicz, Lenagan, and the second-named author have recently given an example of a nil algebra of Gelfand-Kirillov dimension at most three.  Their construction requires a countable base field, however.  We show that for any field $k$ and any monotonically increasing function $f(n)$ which grows super-polynomially but subexponentially there exists an infinite-dimensional finitely generated nil $k$-algebra whose growth is asymptotically bounded by $f(n)$.  This construction gives the first examples of nil algebras of subexponential growth over uncountable fields.
\end{abstract}
\maketitle
\section{Introduction}
In recent years there has been renewed interest in the construction of finitely generated algebraic algebras that are not finite-dimensional \cite{BS, BSS, LS, LSY, Sm1, Sm2, Sm3}.  The first such examples were constructed by Golod and Sharafervich \cite{Go, GS} using a combinatorial criterion that guaranteed that algebras with certain presentations are infinite-dimensional.  Their construction provided a counter-example to Kurosh's conjecture, which asserted that finitely generated algebraic algebras should be finite-dimensional over their base fields.  By modifying their construction, Golod and Shafarevich were also able to give a solution to the celebrated Burnside problem, which is the group-theoretic analogue of the Kurosh conjecture and asked whether finitely generated torsion groups are necessarily finite.  

The connection between Burnside-type problems in the theory of groups and Kurosh-type problems in ring theory has led to many interesting conjectures in both fields, which have arisen naturally from results in one field or the other.  

One of the fascinating developments in the theory of groups has been Gromov's theorem, which states that a finitely generated group of polynomial bounded growth (see Section $2$ for precise definitions) is nilpotent-by-finite; that is, it has a normal nilpotent subgroup of finite index.  As finitely generated nilpotent torsion groups are finite, Gromov's theorem immediately gives the non-trivial result that a finitely generated torsion group of polynomial bounded growth is finite.  

In light of this result, it was natural to ask whether a corresponding result held for rings.  Specifically, Small asked whether a finitely generated algebraic algebra of polynomially bounded growth should be finite-dimensional \cite{LSY}.  Surprisingly, Lenagan and Smoktunowicz \cite{LS} showed that this is not the case, by constructing a finitely generated nil algebra of Gelfand-Kirillov dimension at most $20$.  Their construction only works over countable base fields, however.  Recently, Lenagan, Smoktunowicz, and the second-named author \cite{LSY} have shown that the bound on Gelfand-Kirillov dimension can be replaced by $3$, a substantial improvement.  On the other hand, it is known that the bound cannot be made lower than $2$, as finitely generated algebras of Gelfand-Kirillov dimension strictly less than two satisfy a polynomial identity \cite{SSW}, \cite[Theorem 2.5, p. 18]{KL} and the Kurosh conjecture holds for the class of algebras satisfying a polynomial identity \cite[Section 6.4]{He}.

The fact that these constructions do not work over an uncountable base field is not surprising, as many results have appeared over the years which show there is a real dichotomy that exists regarding Kurosh-type problems when one considers base fields.  For example, algebraic algebras over uncountable fields have the \emph{linearly bounded degree} property; that is, given a fixed finite-dimensional subspace of an algebraic algebra over an uncountable field, there is a natural number $d$, depending on the subspace, such that all elements in this subspace have degree at most $d$.  On the other hand, Smoktunowicz \cite{Sm1} has given an example of a nil algebra over a countable base field with the property that the ring of polynomials over this algebra is not nil and hence this algebra cannot have linearly bounded degree.  

This distinction, and the fact that the elements in a finitely generated algebra over a countable base field can be enumerated, has led to a relative dearth of interesting examples of algebraic algebras over uncountable base fields.  Indeed, over uncountable fields there has not been much progress since the original construction of Golod and Shafarevich.

Our main result is to show that for every field $K$, there exists a finitely generated algebraic $K$-algebra with subexponential growth.  More specifically, we prove the following result.
\begin{thm} \label{thm: main1}
Let $K$ be a field and let 
$\alpha:[0,\infty)\to [0,\infty)$ be a weakly increasing function tending to $\infty$.  Then there is a finitely generated connected graded $K$-algebra $$B=\bigoplus_{n\ge 0} B(n)$$ such that
the homogeneous maximal ideal, $\oplus_{n\ge 1} B(n)$, is nil and ${\rm dim}(B(n))\le n^{\alpha(n)}$ for all sufficiently large $n$.
\end{thm}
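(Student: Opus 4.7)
My strategy is a base-change argument from a countably infinite subfield, using a strengthening of nil-ness that is preserved under extension of scalars. The Lenagan--Smoktunowicz--Young construction already handles countable base fields, so the essentially new case is $K$ uncountable; in every case, $K$ contains a countably infinite subfield $F$ (the prime field if infinite, or else $\mathbb{F}_p(t)\subseteq K$ obtained from any transcendental $t\in K$). The goal is to construct a finitely generated connected graded $F$-algebra $A=\bigoplus_n A(n)$ with $\dim_F A(n)\le n^{\alpha(n)}$ for all large $n$, satisfying the \emph{uniform nilpotence} condition that for every $d$ there exists an integer $N_d$ with $a^{N_d}=0$ for all $a\in V_d:=A(1)\oplus\cdots\oplus A(d)$.

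Given such an $A$, set $B:=A\otimes_F K$. Then $B$ is finitely generated, connected graded, with $\dim_K B(n)=\dim_F A(n)$, so the growth bound transfers. For nil-ness of its augmentation ideal, an arbitrary element $x$ lies in $V_d\otimes_F K$ for some $d$; writing $x=\sum_j e_j\otimes c_j$ in a basis $\{e_j\}$ of $V_d$, the identity $(\sum_j\lambda_j e_j)^{N_d}=0$ in $A$ holds for every choice of $\lambda_j\in F$. Since $F$ is infinite, the resulting polynomial-valued identity in the $\lambda_j$'s vanishes formally in $A$; that is, each multi-homogeneous coefficient (a symmetric sum of $N_d$-fold monomials in the $e_j$) is zero in $A$. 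These same relations persist in $B=A\otimes_F K$, and specializing $\lambda_j\mapsto c_j$ gives $x^{N_d}=0$.

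To build $A$, take the free algebra $\Lambda=F\langle y_1,\ldots,y_m\rangle$ with each $y_i$ in degree one, and form $A=\Lambda/I$ where $I$ is a homogeneous ideal assembled in stages: at stage $d$ one selects $N_d$ and adjoins to $I$ all multi-homogeneous components of $(\sum_j\lambda_j e_j)^{N_d}$ for a basis $\{e_j\}$ of the current image of $V_d$, which precisely forces uniform nilpotence on $V_d$. The Hilbert series of $A$ is then controlled using the Golod--Shafarevich inequality $H_A(t)\bigl(1-mt+R(t)\bigr)\succeq 1$ coefficientwise, where $R(t)=\sum_n r_n t^n$ counts the adjoined relations by degree. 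The parameters $(N_d)$ and $m$ are chosen so that $1-mt+R(t)$ has a positive real root below $1/m$ (ensuring infinite-dimensionality) and so that $\dim_F A(n)\le n^{\alpha(n)}$. The super-polynomial ``budget'' supplied by $\alpha(n)\to\infty$ is what provides the room to fit the relations in without exhausting $\Lambda$.

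The crucial difficulty lies in the last step: the symmetric-power relations at stage $d$ number roughly $\binom{\dim V_d+N_d-1}{N_d}$, which can easily overwhelm the Golod--Shafarevich bound unless $(N_d)$ is calibrated with great care. I expect the heart of the paper to be a quantitative asymptotic analysis that balances these relation counts against the growth budget provided by $\alpha$, perhaps by introducing a more economical family of relations than the full symmetric-monomial system that still enforces $V_d$-uniform nilpotence. Verifying that this balancing can be carried through for an arbitrary $\alpha$ tending to infinity, and hence that the resulting $A$ is infinite-dimensional with the prescribed growth, is where the real work will lie.
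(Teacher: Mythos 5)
Your reduction of the uncountable case to a uniform-nilpotence statement over a countable subfield is sound, and your key idea --- that imposing the multihomogeneous components of $(\sum_j \lambda_j e_j)^{N}$ as relations forces every $K$-linear (not just $F$-linear) combination of the $e_j$ to be nilpotent --- is precisely the mechanism the paper uses: Lemma \ref{lem: smok} works with the coefficients $C(i_1,\ldots,i_d)$ of $t_1^{i_1}\cdots t_d^{i_d}$ in $(w_1t_1+\cdots+w_dt_d)^m$. The paper dispenses with base change by working directly over $K$ and observing that every element of the graded maximal ideal lies in the span of a \emph{finite set of words}, and there are only countably many such sets no matter how large $K$ is; your detour through $F$ is harmless but unnecessary. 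Note also that you cannot simply quote the countable-field constructions for the input algebra $A$: those kill a power of each element one at a time and do not produce uniform (linearly bounded) nilpotence on the subspaces $V_d$, which is exactly why the symmetric-component relations have to be built in from the start.

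The genuine gap is in your growth control. The Golod--Shafarevich inequality $H_A(t)\bigl(1-mt+R(t)\bigr)\succeq 1$ is a \emph{lower} bound on the Hilbert series: it certifies infinite-dimensionality, but it gives no upper bound $\dim A(n)\le n^{\alpha(n)}$, and the quotient of a free algebra by only the nilpotence relations will in general have exponential growth. To obtain subexponential growth one must enlarge the ideal far beyond the ideal generated by the relations and prove simultaneously that the enlarged ideal still contains the nilpotence relations, still has infinite codimension, and admits explicit degreewise dimension bounds. This is the content of Propositions \ref{key} and \ref{prop: ideal}: the decomposition $A(2^n)=U(2^n)\oplus V(2^n)$ with $\dim V(2^n)=2$ outside a sparse set yields ${\rm dim}(A(n)/I(n))\le \sum_j {\rm dim}(L'(j)){\rm dim}(R'(n-j))$ with $L',R'$ inside products of the small spaces $V(2^{i_\ell})$. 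The compatibility condition is quantitative: the relations attached to the $i$-th finite set of words must be compressed into a \emph{single} degree $2^{f(i)}$ and into a subspace of dimension at most $2^{2^{g(i)}}-2$, which is what the bound $(n+1)^d(2p^2)4^p$ of Lemma \ref{lem: smok} and the ``sparse'' choice of $f$ relative to $\alpha$ achieve. None of this is replaceable by a Golod--Shafarevich count, so the balancing act you defer to ``the heart of the paper'' is indeed where the proof lives, and the tool you propose for it cannot do that job.
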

We recall that a $K$-algebra $B=\oplus_n B(n)$ is \emph{connected} if $B(0)=K$ and it is \emph{graded} if each $B(n)$ is a $K$-vector space and $B(n)B(m)\subseteq B(n+m)$ for each $(n,m)\in \mathbb{N}^2$.  

Equivalently, Theorem \ref{thm: main1} says that if $\beta(n)$ is any monotonically increasing function that grows subexponentially but superpolynomially in $n$,  then we can find a connected graded $K$-algebra $B$ whose homogeneous maximal ideal is nil and has the property that the coefficients of its Hilbert series are eventually less than $\beta(n)$.  We note that we do not acquire lower bounds on the growth and there is no reason that our constructions could not in fact have polynomially bounded growth.  We suspect, however, that the growth is indeed superpolynomial.  One should contrast this situation with the situation in group theory, where considerably less is known about the possible growth types of finitely generated torsion groups of superpolynomial growth.  There have been many constructions of \emph{branch groups}, which provide examples of groups that have subexponential but superpolynomial growth.  The first such construction was done by Grigorchuk \cite{Gri}, and estimates of Bartoldi \cite{Bar} show that the growth of Branch group constructions is at least $\exp(\sqrt{n})$, but significantly less than $\exp(cn)$ for any $c > 0$.

We use the methods from the corresponding constructions done over countable base fields \cite{LS, LSY}.  The main differences in our construction is that our algebras must have linearly bounded degree and the elements of our algebras cannot be enumerated.  To get around this, we take a finite-dimensional subspace $V$ that contains $1$ and generates our algebra.  We then write our algebra as a countable union of the nested finite-dimensional subspaces $V^n$ and for each such subspace we give a finite set of relations which implies that each element of the subspace is algebraic.  To get subexponential growth, it is necessary to choose the relations efficiently.  There does not appear to be a way of improving our choice of relations to give an infinite-dimensional finitely generated algebraic algebra of polynomially bounded growth over an uncountable base field.

The outline of this paper is as follows.  In Section $2$, we give background on growth of algebras.  In Section $3$, we prove some general results about the growth of algebras with certain presentations.  In Section $4$, we consider the problem of linearly bounded degree and prove Theorem \ref{thm: main1}.
\section{Growth of groups and algebras}
In this section, we recall the basic definitions of growth, which we will use throughout this paper.
Let $G$ be a finitely generated group with generating set $S=\{g_1,\ldots ,g_d\}$.  Then each element of $G$ can be expressed as a word in $g_1, g_2, \ldots ,g_d$ and $g_1^{-1},\ldots ,g_d^{-1}$.  This gives us a weakly increasing function 
\begin{equation} d_S:\mathbb{N}\to \mathbb{N},\end{equation}
in which $d_S(n)$ is the number of distinct words in $g_1^{\pm 1},\ldots ,g_d^{\pm 1}$ of length at most $n$. 
We call $d_S$ the \emph{growth function of} $G$ \emph{with respect to the generating set} $S$.

While this growth function obviously depends on our choice of generating set $S$, it can be shown that, in some sense, the asymptotic behavior of $d_S$ is independent of this choice \cite{KL}.  More specificially, we say that two functions 
$f,g:\mathbb{N}\to \mathbb{N}$ are \emph{asymptotically equivalent} if there exist positive integers $m_0$ and $m_1$ and positive constants $C_0$ and $C_1$ such that
\begin{equation} \label{eq: 1} f(n) \le C_0g(m_0n) \qquad {\rm for~ all~ sufficiently~ large~} n\end{equation}
and
\begin{equation} \label{eq: 2} g(n) \le C_1f(m_1n) \qquad {\rm for~ all~ sufficiently~ large~} n.\end{equation}
In the case that two functions $f(n)$ and $g(n)$ are asymptotically equivalent, we will write
\begin{equation}
f(n) \asymp g(n).
\end{equation}
(We note that this is slightly different from the notion of two functions of $n$ being asymptotic to each other, which means that their ratio tends to $1$ as $n\to \infty$.  For example, $e^{an}\asymp e^{bn}$ for any $a,b>0$.)
In the case that Equation (\ref{eq: 1}) holds but Equation (\ref{eq: 2}) does not necessarily hold, we say that $f(n)$ is \emph{asymptotically dominated} by $g(n)$ and write
\begin{equation} f(n) \preceq g(n).
\end{equation}
We note that if $S$ and $T$ are two finite generating sets for a finitely generated group $G$, then there exist natural numbers $m_0$ and $m_1$ such that
$$T\cup T^{-1} \subseteq (S\cup S^{-1})^{m_0}$$ and
$$S\cup S^{-1} \subseteq (T\cup T^{-1})^{m_1}.$$  This observation immediately gives the following important result.
\begin{thm} Let $G$ be a finitely generated group.  Then all growth functions of $G$ are asymptotically equivalent.
\end{thm}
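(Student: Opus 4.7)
The plan is to use directly the observation stated immediately before the theorem. Fix any two finite generating sets $S$ and $T$ of $G$, and choose natural numbers $m_0, m_1$ so that $T\cup T^{-1}\subseteq (S\cup S^{-1})^{m_0}$ and $S\cup S^{-1}\subseteq (T\cup T^{-1})^{m_1}$. My goal is to show $d_T(n)\le d_S(m_0 n)$ and $d_S(n)\le d_T(m_1 n)$ for all $n$; this gives asymptotic equivalence in the sense of Equations (\ref{eq: 1}) and (\ref{eq: 2}) with constants $C_0=C_1=1$.

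The argument for $d_T(n)\le d_S(m_0 n)$ is the key step, and it is essentially a word-rewriting observation. Any element $g\in G$ that can be expressed as a word of length at most $n$ in $T\cup T^{-1}$ can have each of its letters replaced, by the containment $T\cup T^{-1}\subseteq (S\cup S^{-1})^{m_0}$, by a word of length at most $m_0$ in $S\cup S^{-1}$. Concatenating these substitutions produces an expression for $g$ as a word of length at most $m_0 n$ in $S\cup S^{-1}$. Hence the set of group elements counted by $d_T(n)$ injects into the set counted by $d_S(m_0 n)$, yielding the desired inequality. The inequality $d_S(n)\le d_T(m_1 n)$ is obtained identically, swapping the roles of $S$ and $T$.

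Combining the two inequalities gives $d_S\asymp d_T$, establishing the theorem. I do not anticipate any serious obstacle: the only thing to verify carefully is that the word-length bound is preserved under the substitution, which is immediate from the definition of the product $(S\cup S^{-1})^{m_0}$ as the set of elements expressible as words of length at most $m_0$ in $S\cup S^{-1}$.
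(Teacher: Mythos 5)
Your proposal is correct and is exactly the argument the paper intends: the paper states the containments $T\cup T^{-1}\subseteq (S\cup S^{-1})^{m_0}$ and $S\cup S^{-1}\subseteq (T\cup T^{-1})^{m_1}$ and declares that the theorem follows immediately, and your letter-by-letter substitution (giving $d_T(n)\le d_S(m_0n)$ and $d_S(n)\le d_T(m_1n)$ with $C_0=C_1=1$) is precisely the omitted verification. No gaps.
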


We thus find it convenient to work with growth functions modulo the relation of asymptotic equivalence.  As a result, we can speak unambiguously in this sense of the \emph{growth function} of a finitely generated group $G$.

One can do a similar construction for algebras.  If $K$ is a field and $A$ is a finitely generated $K$-algebra, then we can select a finite-dimensional $K$-vector subspace $V$ of $A$ with the property that $1_A\in V$ and $V$ generates $A$ as a $K$-algebra.  We can again define a monotonically increasing function
\begin{equation} d_V:\mathbb{N}\to \mathbb{N},\end{equation}
given by the rule
\begin{equation} d_V(n) = {\rm dim}_K(V^n).
\end{equation}
As in the case with groups, if $V$ and $W$ are two finite-dimensional subspaces of $A$ that both contain $1$ and generate $A$ as a $K$-algebra then $d_V(n)\asymp d_W(n)$, and we can again speak unambiguously of the growth function of an algebra.

The growth function provides an important invariant in the study of algebras and groups.  The following definition gives a coarse, but nevertheless useful, taxonomy in the study of groups and algebras in terms of growth.
\begin{defn}Let $f(n)$ be a growth function of either a group or an algebra.  We say that the growth is \emph{polynomially bounded} if there exists a positive real number $\alpha>0$ such that 
$f(n)\preceq n^{\alpha}$; we say that the growth is \emph{exponential} if there exists $C>1$ such that
$f(n) \succeq C^n$; otherwise, we say that the growth is of \emph{intermediate type}.
\end{defn}

The Gelfand-Kirillov dimension of an algebra is a related invariant that gives one information about its growth.  If $A$ is an algebra of polynomially bounded growth, then the \emph{Gelfand-Kirillov} dimension of $A$ is the infimum over all positive numbers $\alpha$ such that its growth function is asymptotically dominated by $n^{\alpha}$.  If, on the other hand, $A$ is an algebra of superpolynomial growth, then we declare the Gelfand-Kirillov dimension of $A$ to be infinite. In particular, the example of Lenagan, Smoktunowicz, and the second-named author has a growth function that is asymptotically dominated by $n^{3+\epsilon}$ for every $\epsilon>0$.

One of the truly great results in the theory of growth of groups and algebras is Gromov's \cite{Gro} characterization of finitely generated groups  with polynomially bounded growth, showing that all such groups are nilpotent-by-finite.  In particular, this gives an affirmative answer to the Burnside problem for the class of finitely generated groups of polynomially bounded growth.  Golod and Sharafevich \cite{Go, GS} gave the first example of a finitely generated infinite torsion group and their example is easily seen to have exponential growth.  In light of the work of Gromov, it was natural to ask whether a finitely generated torsion group of intermediate growth could exist.  The first such example was given by Grigorchuk \cite{Gri}, who produced a torsion group with intermediate growth; in fact, the growth function of his example is aysmptotically greater than $\exp(\sqrt{n})$ but asymptotically less than $\exp(n)$.  Since Grigorchuk's original example, there have been improvements to the branch group construction, but at the moment there does not exist a family of constructions that can achieve arbitrarily slow superpolynomial growth.

Theorem \ref{thm: main1} actually shows that for any monotonically increasing function $\beta(n)$ of intermediate growth, there is an algebraic algebra over an uncountable field with the property that its growth function is asymptotically dominated by $\beta(n)$.  This is a sharp contrast with the corresponding situation for torsion groups.

\section{Combinatorial results}
In this section, we modify the construction of Lenagan and Smoktunowicz and the second-named author \cite{LSY} to give a general criterion for producing algebras with low growth and certain presentations.

Let $K$ be a field and let $A=K\{x,y\}$ denote the free $K$-algebra on two generators $x$ and $y$.  
Then $A$ is an $\mathbb{N}$-graded algebra, and we let $A(n)$ denote the $K$-subspace spanned by all words over $x$ and $y$ of length $n$.
We make use of a key proposition of Lenagan and Smoktunowicz \cite[Theorem 3]{LS}, which we have expressed in a slightly more general form.
\begin{prop} Let $f,g:\mathbb{N}\to \mathbb{N}$ be two maps satisfying
\begin{enumerate}
\item[(i)] $f(i-1)< f(i)-g(i)-1$ for all natural numbers $i$,
\item[(ii)] for each natural number $i$, there is a subspace $W_i\subseteq A(2^{f(i)})$ whose dimension is at most $2^{2^{g(i)}}-2$,
\end{enumerate}
and let $$T=\bigcup_i \{f(i)-g(i) -1,f(i)-g(i),\ldots , f(i)-1\}.$$
Then for each natural number $n$, there exist $K$-vector subspaces $U(2^n)$ and $V(2^n)$ of $A(2^n)$ satisfying the following properties:
\begin{enumerate}
\item $U(2^n)\oplus V(2^n)=A(2^n)$ for every natural number $n$;
\item $\dim(V(2^n))=2$ whenever $n\not\in T$;
\item $\dim(V(2^{n+j} ))=2^{2^j}$ whenever $n= f(i)-g(i)-1$ and $0\le j\le g(i)$;
\item for each natural number $n$, $V(2^n)$ has a basis consisting of words over $x$ and $y$;
\item for each natural number $i$, $W_i\subseteq U(2^{f(i)})$;
\item $A(2^n)U(2^n)+U(2^n)A(2^n)\subseteq U(2^{n+1})$ for every natural number $n$;
\item $V(2^{n+1})\subseteq V(2^n)V(2^n)$ for every natural number $n$;
\item if $n\not \in T$ then there is some word $w\in V(2^n)$ such that $wA(2^n)\subseteq U(2^{n+1})$.
\end{enumerate}
\label{key}
\end{prop}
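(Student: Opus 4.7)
The plan is to construct $U(2^n)$ and $V(2^n)$ simultaneously by induction on $n$, starting from the base case $V(1) = A(1) = Kx + Ky$ and $U(1) = 0$, and at each step selecting $V(2^{n+1})$ as a specific word-spanned subspace of $V(2^n)V(2^n)$ together with its complement $U(2^{n+1})$ in $A(2^{n+1})$.

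The induction splits into four regimes according to where $n+1$ sits relative to $T$ and $\{f(i) : i \in \mathbb{N}\}$. (a) If $n+1$ lies strictly inside some interval $\{f(i)-g(i)-1,\ldots, f(i)-1\}$, we take $V(2^{n+1}) = V(2^n) V(2^n)$ and $U(2^{n+1}) = A(2^n)U(2^n) + U(2^n)A(2^n)$, thereby squaring the dimension of $V$. (b) If $n+1 = f(i) - g(i) - 1$ is the start of an interval, we treat it exactly as case (d). (c) If $n+1 = f(i)$ is immediately after an interval ends, then $V$ must shrink from dimension $2^{2^{g(i)}}$ back to $2$ while simultaneously $W_i$ is absorbed into $U(2^{n+1})$. (d) If $n+1 \notin T$ and $n+1 \ne f(j)$ for any $j$, then $V(2^n)$ has a word basis $\{w_1, w_2\}$ with $w_1$ designated the killed word by property (8) at level $n$, and we set $V(2^{n+1}) = \langle w_2 w_1,\, w_2 w_2 \rangle$ and $U(2^{n+1}) = A(2^n)U(2^n) + U(2^n)A(2^n) + w_1 A(2^n)$. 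The free-algebra factorization $A(2^{n+1}) \cong A(2^n) \otimes A(2^n)$ under concatenation makes the direct-sum decomposition $A(2^{n+1}) = U(2^{n+1}) \oplus V(2^{n+1})$ immediate in each regime.

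The main obstacle is regime (c). Here $V(2^n)$ has a word basis $\{v_\sigma : \sigma \in \Sigma\}$ with $|\Sigma| = k := 2^{2^{g(i)}}$, so $V(2^n)V(2^n)$ has a word basis $\{v_\sigma v_\tau\}$ of size $k^2$, whereas the image of $W_i$ in the quotient $A(2^{n+1}) / (A(2^n)U(2^n)+U(2^n)A(2^n)) \cong V(2^n) \otimes V(2^n)$ has dimension at most $k - 2$ by hypothesis (ii). We need to find two products $v_{\sigma_1}v_{\tau_1}$ and $v_{\sigma_2}v_{\tau_2}$ whose classes modulo that image are linearly independent; then $V(2^{n+1})$ is their span and $U(2^{n+1})$ is the complement in $A(2^{n+1})$ containing $W_i + A(2^n)U(2^n) + U(2^n)A(2^n)$. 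Such a pair exists because the $k^2$ product words span a quotient of dimension at least $k^2 - (k-2) = k^2 - k + 2 \ge 2$, and any spanning set of a vector space of dimension at least two contains two linearly independent elements. This is the point at which the calibration $\dim W_i \le 2^{2^{g(i)}} - 2$ is essential, and because the argument is purely a dimension count it goes through over an arbitrary base field $K$.

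Finally, in both cases (c) and (d) we designate one of the selected basis words (say $v_{\sigma_1}v_{\tau_1}$ or $w_2 w_1$) as the killed word at level $n+1$; at the following step this commitment is honored by adjoining its right multiples by $A(2^{n+1})$ to $U(2^{n+2})$, which enforces property (8) inductively. With this bookkeeping, properties (1)--(8) at every level follow by routine verification from the tensor decomposition, the word-basis structure, and the explicit definitions; the only place the argument is not purely formal is the dimension count in regime (c).
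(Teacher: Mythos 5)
Your proposal is correct and follows essentially the same route as the paper: the same inductive construction with base case $V(1)=Kx+Ky$, $U(1)=(0)$, the same case split (your regimes (a), (b)$=$(d), (c) are the paper's Cases I, II, III), the same squaring step inside the intervals of $T$, the same "kill one word, keep its complement's products" step outside $T$ to enforce property (8), and the same codimension-two dimension count at $n+1=f(i)$ to fit $W_i$ into $U(2^{f(i)})$ while retaining two independent product words for $V(2^{f(i)})$. The only differences are cosmetic (which of the two basis words is killed, and phrasing the Case III count in the quotient $V(2^n)\otimes V(2^n)$ rather than via the explicit decomposition $x_i=y_i+z_i$).
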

One should think of the subspaces $U(n)$ and $V(n)$ as follows.  Condition (6) says that the sum of the $U(n)$ is in some sense very close to being a two-sided ideal.  It is not a two-sided ideal, but we will show that there is a homogeneous two-sided ideal $I$ which is a close approximation to this space.  Then one should think of the image of the sum of the $V(n)$ when we mod out by this ideal as being very close to a basis for the factor ring $A/I$.   

The fact that there are infinitely many $n\not \in T$ and conditions (2) and (3) say that the growth of $A/I$ should be small if $g(n)$ is grows sufficiently slowly compared to $f(n)$. The role of the subspaces $W_i$ is that they correspond to homogeneous relations we introduce.  Thus if we are not introducing too many relations and we have that the dimension of $W_i$ is bounded by $2^{2^{g(i)}}-2$, then we can hope to find an infinite-dimensional algebra with slow growth in which the images of all relations coming from the subspaces $W_i$ are zero.  

\begin{proof}[Proof of Proposition \ref{key}] The proof is similar to that of Lenagan and Smoktunowicz and the second-named author \cite[Theorem 3.1]{LSY}.   We use induction on $n$ and divide the proof into three cases.  We take $V(1)=Kx+Ky$ and $U(1)=(0)$.  Then conditions (1)--(8) are satisfied, when applicable, in the case that $n=0$.  Next suppose that we have defined $V(2^m)$ and $U(2^m)$ for all $m\le n$ which satisfy conditions (1)--(8).  We show how to define $V(2^{n+1})$ and $U(2^{n+1})$.  
There are three cases.
\vskip 2mm
\emph{Case I:} $\{n,n+1\}\subseteq T$.  In this case, we take $V(2^{n+1})=V(2^n)V(2^n)$ and
$U(2^{n+1})=A(2^n)U(2^n)+U(2^n)A(2^n)$.  Then we only need to check that conditions (1) and (4) hold.  Since $V(2^n)$ has a basis consisting of words, we see that $V(2^{n+1})$ has a basis consisting of all words formed by concatenating two words from a basis for $V(2^n)$ consisting of words.   Since $V(2^n)\oplus U(2^n)=A(2^n)$, we see that
$$A(2^{n+1})=\left(V(2^n)\oplus U(2^n)\right)\left(V(2^n)\oplus U(2^n)\right)=V(2^{n+1})\oplus U(2^{n+1}),$$ giving condition (1).
\vskip 2mm
\emph{Case II:} $n\not\in T$.  Then $V(2^n)$ has a basis consisting of two words $w_1$ and $w_2$.   We set $V(2^{n+1})$ to be the span of $w_1^2$ and $w_1w_2$, and we take
$$U(2^{n+1})=A(2^n)U(2^n)+U(2^n)A(2^n)+w_2V(2^n).$$  Then by construction properties 
(1)--(8) hold (note that $w_2A(2^n)\subseteq U(2^{n+1})$, giving property (8)).
\vskip 2mm
\emph{Case III:} $n\in T$, $n+1\not\in T$.  This is the only case where the functions $f$ and $g$ come in to play.  We pick a basis $x_1,\ldots ,x_d$ for $W_i$.  Since $A(2^n)=V(2^n)\oplus U(2^n)$, we see that decompose each $x_i$ uniquely as $y_i+z_i$, where $y_i\in V(2^n)V(2^n)$ and $z_i\in U(2^n)A(2^n)+A(2^n)U(2^n)$.  Since $n$ is in $T$ and $n+1\not \in T$, we see that $n=f(i)-1$ for some $i$ and hence ${\rm dim}(V(2^n))\ge 2^{2^{g(i)}}$.   By assumption $d={\rm dim}(W_i)\le 2^{2^{g(i)+1}}$, which implies that $${\rm dim}(V(2^n)V(2^n))\ge {\rm dim}(W_i)+2.$$  Thus the span of the $y_i$ is a subspace of $V(2^n)V(2^n)$ whose codimension is at least $2$.  Moverover, $V(2^n)V(2^n)$ has a basis consisting of words over $x$ and $y$ and hence there are two words $w_1,w_2\in V(2^n)V(2^n)$ such that $\{w_1,w_2,y_1,\ldots ,y_d\}$ is a linearly independent set.  
We take $V(2^{n+1})$ to be the span of $w_1$ and $w_2$.  We pick a subspace $Y\subseteq A(2^{n+1})$ with the properties that $Y\oplus (Kw_1+Kw_2)=V(2^{n})V(2^n)$ and $Y\supseteq Ky_1+\cdots +Ky_d$.  Then we take
$$U(2^{n+1})=U(2^n)A(2^n) + V(2^n)U(2^n) + Y.$$
Then by construction we have properties (1)--(4) hold.  Also, property (5) holds, as $Y$ contains the $y_i$ and each $x_i$ is in $U(2^n)A(2^n) + V(2^n)U(2^n)$.  By construction (6)--(7) hold and (8) does not apply.  This completes the proof.
\end{proof}

We find it convenient to fix our notation for the remainder of this section.
\begin{notn} We use the following notation and assumptions:
\begin{enumerate} \item we let $K$ denote a field;
\item we let $A=K\{x,y\}$ denote the free $K$-algebra on two generators;
\item we assume that we have maps $f,g:\mathbb{N}\to \mathbb{N}$ and subspaces $W_1,W_2,\ldots $ with $W_i\subseteq A(2^{f(i)})$ satisfying conditions (i) and (ii) of Proposition \ref{key};
\item  for each natural number $n$, we assume that we have homogeneous subspaces $U(2^n)$ and $V(2^n)$ of $A(2^n)$ satisfying conditions (1)--(8) of Proposition \ref{key}.
\end{enumerate}
\label{notn: 1}
\end{notn}
Using the assumptions and notation of Notation \ref{notn: 1}, we introduce four auxiliary families of subspaces.
For each natural number $n$, we construct four subspaces $L(n), L'(n), R(n),$ and $R'(n)$ of $A(n)$ satisfying the following properties:
\begin{equation}
L(n)A(2^{m+1}-n)\subseteq U(2^{m+1})\qquad {\rm for}~ n\in \{2^m,\ldots , 2^{m+1}-1\}; \end{equation}

\begin{equation} A(2^{m+1}-n)R(n)\subseteq U(2^{m+1})\qquad {\rm for}~n\in \{2^m,\ldots , 2^{m+1}-1\}; \end{equation}
and
\begin{equation} L(n)\oplus L'(n)=R(n)\oplus R'(n)=A(n) \qquad \textrm{for every natural number }n. 
\end{equation}
We first construct $L(n)$ and $R(n)$.  Given a natural number $n$, we pick $m$ such that 
$2^m\le n<2^{m+1}$ and define
\begin{equation}L(n) \ =  \ \{ x\in A(n)~:~xA(2^{m+1}-n)\subseteq U(2^{m+1}),
\end{equation}
and
\begin{equation}R(n) \ =  \ \{ x\in A(n)~:~A(2^{m+1}-n)x\subseteq U(2^{m+1}).
\end{equation}
In the paper of Lenagan and Smoktunowicz, the spaces $L, L', R,$ and $R'$ are called, respectively, $R, Q, S,$ and $W$.  We change notation only because we find it convenient to let $L(n)$ denote the set of elements of $A(n)$ such that when we multiply on the \emph{left} by these elements, we land in $U(2^{m+1})$.  Similarly, $R(n)$ denotes the elements which when we multiply on the \emph{right} with these elements we land in $U(2^{m+1})$.  Then $L'(n)$ and $R'(n)$ are just complements of $L(n)$ and $R(n)$, respectively, which we shall choose later.

We note that many of the proofs of Lenagan and Smoktunowicz and the second-named author \cite{LSY} go through more or less unchanged in this section, although we have stated some of them in a slightly different form.   For this reason, and the fact that our choice of notation is somewhat different than that used in the aforementioned paper, we give proofs of these results.

\begin{lem} Assume the notation and assumptions of Notation \ref{notn: 1}.  Let $n$ be a natural number and let $2^{i_1}+\cdots + 2^{i_d}$ denote its binary expansion with $i_1<i_2<\cdots <i_d$.  Then $$V(2^{i_1})V(2^{i_2})\cdots V(2^{i_d})+R(n)=V(2^{i_d})V(2^{i_{d-1}})\cdots V(2^{i_1})+L(n)=A(n).$$
\label{lem: 11}
\end{lem}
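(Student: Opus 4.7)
My plan is to prove the equality involving $R(n)$ by induction on $n$ (equivalently on the number $d$ of bits in its binary expansion); the identity for $L(n)$ is proved identically by the mirror argument with left and right interchanged. Throughout, set $m = i_d$, so that $2^m \le n < 2^{m+1}$ and $R(n) = \{z \in A(n) : A(2^{m+1} - n) z \subseteq U(2^{m+1})\}$. For the base case $d = 1$, one has $n = 2^m$ and $2^{m+1} - n = 2^m$; property (6) of Proposition \ref{key} tells us that $A(2^m) U(2^m) \subseteq U(2^{m+1})$, so $U(2^m) \subseteq R(2^m)$, and the desired equality follows immediately from $A(2^m) = V(2^m) \oplus U(2^m)$.

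Before handling the inductive step I would establish an elementary strengthening of property (6): for all $k \ge \ell$, $A(2^k - 2^\ell) U(2^\ell) \subseteq U(2^k)$, proved by induction on $k - \ell$ via the decomposition $2^k - 2^\ell = 2^{k-1} + (2^{k-1} - 2^\ell)$ and two applications of property (6). Now set $n' = 2^{i_1} + \cdots + 2^{i_{d-1}}$, so $n = n' + 2^m$. Writing $A(n) = A(n') A(2^m) = A(n') V(2^m) + A(n') U(2^m)$ and observing that $A(2^{m+1}-n) A(n') U(2^m) \subseteq A(2^m) U(2^m) \subseteq U(2^{m+1})$, one sees $A(n') U(2^m) \subseteq R(n)$. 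Next apply the inductive hypothesis to $n'$: $A(n') = V(2^{i_1}) \cdots V(2^{i_{d-1}}) + R(n')$. The remaining step is to show $R(n') V(2^m) \subseteq R(n)$: for $r \in R(n')$, factor $2^m - n' = (2^m - 2^{i_{d-1}+1}) + (2^{i_{d-1}+1} - n')$; the defining property of $R(n')$ sends the rightmost factor $A(2^{i_{d-1}+1} - n')\cdot r$ into $U(2^{i_{d-1}+1})$, and the auxiliary lemma then absorbs the leftmost chunk $A(2^m - 2^{i_{d-1}+1})$ to yield $A(2^m - n') r \subseteq U(2^m)$. A final application of property (6) gives $A(2^m - n') r V(2^m) \subseteq U(2^m) A(2^m) \subseteq U(2^{m+1})$, so indeed $rv \in R(n)$ for every $v \in V(2^m)$.

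The principal obstacle is precisely the auxiliary lemma $A(2^k - 2^\ell) U(2^\ell) \subseteq U(2^k)$, which is what allows the $R$-annihilation data at scale $n'$ (a statement relative to $U(2^{i_{d-1}+1})$) to be promoted to annihilation data at scale $n$ (a statement relative to $U(2^{m+1})$). The edge case $i_d = i_{d-1}+1$ is automatically harmless, since then the leftmost chunk is $A(0) = K$ and the absorption step is trivial. Once this iterated absorption is in hand, every other manipulation is bookkeeping with the binary expansion of $n$, and the symmetric statement involving $L(n)$ follows by the same argument with left multiplication replaced by right multiplication throughout.
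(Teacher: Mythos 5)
Your proof is correct, but it is organized differently from the paper's. The paper argues directly: it expands $A(n)=\prod_{j}\bigl(U(2^{i_j})\oplus V(2^{i_j})\bigr)$ all at once, so that $A(n)=V(2^{i_1})\cdots V(2^{i_d})+T(n)$ where $T(n)$ is the sum of the terms $T_j(n)=A(2^{i_1}+\cdots+2^{i_{j-1}})U(2^{i_j})A(2^{i_{j+1}}+\cdots+2^{i_d})$, and then shows $T_j(n)\subseteq R(n)$ using the two-sided absorption estimate $A(q2^m)U(2^m)A(2^p-(q+1)2^m)\subseteq U(2^p)$ (the paper's Equation (\ref{eq: ind}), itself obtained by a ``straightforward induction'' from property (6)). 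You instead induct on the number $d$ of binary digits, peeling off the top factor $A(2^{i_d})$, and you only need the one-sided special case $A(2^k-2^\ell)U(2^\ell)\subseteq U(2^k)$ of that estimate; the price is the extra observation that $R(n')V(2^{i_d})\subseteq R(n)$, i.e.\ that annihilation data at scale $n'$ promotes to scale $n$, which you verify correctly (including the edge case $i_d=i_{d-1}+1$). The two arguments rest on the same mechanism --- iterating property (6) to absorb a $U$-factor into $U$ at a higher dyadic scale --- so neither buys much over the other; the paper's version has the mild advantage of exhibiting $T(n)$ as a single explicit complement of the $V$-product inside $R(n)$ in one stroke, while yours isolates the absorption lemma in its weakest needed form. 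Your mirror argument for $L(n)$ is also consistent with the ordering $V(2^{i_d})\cdots V(2^{i_1})$ in the statement.
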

\begin{proof} We first prove that 
$$V(2^{i_1})V(2^{i_2})\cdots V(2^{i_d})+R(n)=A(n).$$
We note that for $j\in \{1,\ldots ,d\}$, we have $A(2^{i_j})=U(2^{i_j})\oplus V(2^{i_j})$.  
We have the decomposition
$$A(n) = \prod_{j=1}^d A(2^{i_j}) = \prod_{j=1}^d \left(U(2^{i_j})\oplus V(2^{i_j})\right).$$
Consequently
$$A(n) =\left( \prod_{j=1}^d V(2^{i_j})\right) \oplus T(n),$$
where 
$$T(n)=\sum_{j=1}^n \left( \prod_{\ell<j} A(2^{i_{\ell}})\right) U(2^{i_{j}}) \left( \prod_{\ell>j} A(2^{i_{\ell}})\right).$$
By Proposition \ref{key} (6), we have that $A(2^m)U(2^m)$ and $U(2^m)A(2^m)$ 
are contained in $U(2^{m+1})$.  
A straightforward induction (cf. Lenagan et al. \cite[Lemma 3.2]{LSY}) gives that if $p>m$ and $0\le q< 2^{m-n}$ then \begin{equation}\label{eq: ind} 
A(q 2^m)U(2^m) A(2^p - (q+1)2^m)\subseteq U(2^p). \end{equation}  

Observe that for $j\in \{1,\ldots ,d\}$, the space
\begin{eqnarray*}
T_j(n) &:=& \left( \prod_{\ell<j} A(2^{i_{\ell}})\right) U(2^{i_{j}}) \left( \prod_{\ell>j} A(2^{i_{\ell}})\right) 
\\ &=&A(2^{i_1}+\cdots + 2^{i_{j-1}})U(2^{i_j})A(2^{i_{j+1}}+\cdots + 2^{i_d})\end{eqnarray*}
has the property that
\begin{eqnarray*}
A(2^{i_{d}+1}-n)T_j(n) &=& A(2^{i_d+1}-n+ 2^{i_1}+\cdots + 2^{i_{j-1}})U(2^{i_j})A(2^{i_{j+1}}+\cdots + 2^{i_d}) \\
&=& A(2^{i_d+1}-2^{i_j}-2^{i_{j+1}}-\cdots - 2^{i_d})U(2^{i_j})A(2^{i_{j+1}}+\cdots + 2^{i_d})\\
&=& A(q2^{i_j})U(2^{i_j})A(2^{i_d+1}-(q+1)2^{i_j}),\end{eqnarray*}
where $$q=2^{i_d+1-i_j}-\sum_{\ell=j}^d 2^{i_{\ell}-i_j}.$$
Hence from Equation (\ref{eq: ind}), we see that $A(2^{i_{d}+1}-n)T_j(n)\subseteq U(2^{i_d+1})$ and so by definition,
$T_j(n)\subseteq R(n)$.  Thus $T(n)\subseteq R(n)$ and so 
$$V(2^{i_1})V(2^{i_2})\cdots V(2^{i_d})+R(n)=A(n).$$

The fact that
$$V(2^{i_1})V(2^{i_2})\cdots V(2^{i_d})+L(n)=A(n)$$ follows from a symmetric argument, in which we decompose $A(n)$ in the reverse order as done in the first part of the proof.  That is, we write
$$A(n)=A(2^{i_d})\cdots A(2^{i_1}),$$ and proceed identically to how we argued in the first case. 

\end{proof}
\begin{cor} Assume the notation and assumptions of Notation \ref{notn: 1}.  Let $n$ be a natural number and let $2^{i_1}+\cdots + 2^{i_d}$ denote its binary expansion with $i_1<i_2<\cdots <i_d$.  Then there exist subspaces $$R'(n)\subseteq V(2^{i_1})V(2^{i_2})\cdots V(2^{i_d})$$ and $$L'(n)\subseteq V(2^{i_d})V(2^{i_{d-1}})\cdots V(2^{i_1})$$ such that
$$R(n)\oplus R'(n)=L(n)\oplus L'(n)=A(n).$$  
\end{cor}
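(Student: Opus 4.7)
The plan is to deduce the corollary from Lemma \ref{lem: 11} via a standard linear-algebra completion: whenever $W+X=E$ as subspaces of an ambient vector space $E$, one can always pick a complement of $X$ in $E$ that actually lies inside $W$. The corollary is essentially a bookkeeping repackaging of the lemma, so I do not expect a substantive obstacle.

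First, I will introduce the notation $W_R := V(2^{i_1})V(2^{i_2})\cdots V(2^{i_d})$ and $W_L := V(2^{i_d})V(2^{i_{d-1}})\cdots V(2^{i_1})$ to keep the argument readable. Lemma \ref{lem: 11} gives the two equalities $W_R + R(n) = A(n)$ and $W_L + L(n) = A(n)$, so both $W_R$ and $W_L$ surject onto $A(n)/R(n)$ and $A(n)/L(n)$ respectively.

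Next I will invoke the following elementary observation and record a short justification: if $U$ and $X$ are subspaces of a finite-dimensional vector space $E$ with $U+X=E$, then choosing any vector-space complement $U'$ of $U\cap X$ inside $U$ yields $U'\oplus X=E$. Indeed, the composite $U\hookrightarrow E\twoheadrightarrow E/X$ has kernel precisely $U\cap X$, so once we split $U = (U\cap X)\oplus U'$, the restriction of the quotient map to $U'$ is an isomorphism onto $E/X$, which is equivalent to $U'\oplus X=E$.

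Finally, I will apply this observation twice inside $A(n)$: once to the pair $(W_R, R(n))$, which produces the desired $R'(n)\subseteq W_R$ with $R(n)\oplus R'(n)=A(n)$; and once to the pair $(W_L, L(n))$, which produces the desired $L'(n)\subseteq W_L$ with $L(n)\oplus L'(n)=A(n)$. This completes the proof, with all real content sitting in Lemma \ref{lem: 11}; the corollary only pins down that the complements $L'(n)$ and $R'(n)$ required by the framework of Notation \ref{notn: 1} can be chosen to live inside the prescribed products of the spaces $V(2^{i_j})$.
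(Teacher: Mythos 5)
Your proposal is correct and is exactly the routine complement-choosing argument that the paper itself relies on (its proof of the corollary is simply ``a straightforward consequence of Lemma \ref{lem: 11}''). The elementary observation you record --- that $U+X=E$ allows a complement of $X$ to be chosen inside $U$ by splitting off $U\cap X$ --- is the intended justification, so nothing further is needed.
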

\begin{proof} This is a straightforward consequence of Lemma \ref{lem: 11}.
\end{proof}
The next result combines the preceding results and shows that under general conditions one can construct a homomorphic image of a free algebra on two generators with good upper bounds on its growth.
\begin{prop} Assume the notation and assumptions of Notation \ref{notn: 1}.  There exists a homogeneous two-sided ideal $I$ of $A=K\{x,y\}$ such that:
\begin{enumerate}
\item if $J$ is a homogeneous ideal of $A$ satisfying $J\subseteq \sum_k A(k2^{f(i)})W_iA$ for some natural number $i$, then $J\subseteq I$;
\item $V(2^n)\not \subseteq I$ for each natural number $n$;
\item $I$ has infinite codimension;
\item for every natural number $n$, 
$${\rm dim}(A(n)/I(n)) \le \sum_{j=0}^n {\rm dim}(L'(j)){\rm dim}(R'(n-j)).$$
\end{enumerate}\label{prop: ideal}
\end{prop}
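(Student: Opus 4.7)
\noindent
My plan is to build $I$ via Zorn's lemma starting from a natural base ideal. For each $i$, let
\[
J_i \ = \ \{x \in A : Ax \subseteq \sum_k A(k2^{f(i)})W_iA\},
\]
the largest homogeneous two-sided ideal contained in the right ideal $\sum_k A(k2^{f(i)})W_iA$ (note that the largest two-sided ideal inside any right ideal $S$ is $\{x : Ax \subseteq S\}$). The base ideal is $I_0 := \sum_i J_i$. First I would observe that $I_0(2^n) \subseteq U(2^n)$: any $x \in J_i$ of degree $2^n$ lies in $\sum_{k:(k+1)2^{f(i)}\le 2^n} A(k2^{f(i)})W_iA(2^n-(k+1)2^{f(i)})$, and by condition~(5) of Proposition~\ref{key} (giving $W_i\subseteq U(2^{f(i)})$) together with iterated application of Equation~(\ref{eq: ind}) (which rests on condition~(6)), each such summand sits inside $U(2^n)$. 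Since $V(2^n)\cap U(2^n)=0$, the ideal $I_0$ meets every $V(2^n)$ trivially.

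\noindent
Next I would invoke Zorn's lemma on the collection $\mathcal{F}$ of homogeneous two-sided ideals $J \supseteq I_0$ with $V(2^m)\cap J = 0$ for every $m$. Finite-dimensionality of each $V(2^m)$ guarantees that unions of chains in $\mathcal{F}$ remain in $\mathcal{F}$, so Zorn produces a maximal element $I$. Then (1) holds because any homogeneous ideal inside $\sum_k A(k2^{f(i)})W_iA$ sits inside $J_i\subseteq I_0\subseteq I$; (2) is built into $\mathcal{F}$; and (3) follows since the nonzero projections of the $V(2^n)$ in $A/I$ live in pairwise distinct gradings, so $\dim(A/I)=\infty$.

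\noindent
The main obstacle is (4). The key observation is that, since $A$ is free, the multiplication map $A(j)\otimes_K A(n-j)\to A(n)$ is a $K$-linear bijection; using $A(j)=L(j)\oplus L'(j)$ and $A(n-j)=R(n-j)\oplus R'(n-j)$ from the preceding corollary, one obtains
\[
A(n) \ = \ L(j)A(n-j) \, + \, A(j)R(n-j) \, + \, L'(j)R'(n-j)
\]
for every $j$. The bound in~(4) will then follow, once we know $I(n)\supseteq L(j)A(n-j)+A(j)R(n-j)$ for each $j$, from the induced surjection $L'(j)R'(n-j)\twoheadrightarrow A(n)/I(n)$; this yields $\dim A(n)/I(n)\le \dim L'(j)\dim R'(n-j)$ for every $j$, which trivially implies the stated sum. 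Establishing the containment $L(j)A(n-j)+A(j)R(n-j)\subseteq I$ is the technical heart of the proof: one needs to verify that enlarging $I$ by these subspaces does not pull any $V(2^m)$ into the ideal (so that, by maximality of $I$ in $\mathcal{F}$, they are already absorbed into $I$). The argument should proceed by induction on $n$, using the structural fact from the proof of Lemma~\ref{lem: 11} that $T_j(n)\subseteq R(n)$ with its left-handed mirror in $L(n)$, together with the specific choice of $L'(n),R'(n)\subseteq V(2^{i_1})\cdots V(2^{i_d})$ afforded by the corollary and the word basis from condition~(4) of Proposition~\ref{key}; this is the delicate point, and I would verify it by tracing through the analogous argument in~\cite{LSY}.
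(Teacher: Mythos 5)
Your Zorn's-lemma route diverges from the paper, which instead defines $I$ \emph{explicitly}: for $2^m\le n<2^{m+1}$ it sets $I(n)=\{x\in A(n):A(j)xA(2^{m+2}-j-n)\subseteq U(2^{m+1})A(2^{m+1})+A(2^{m+1})U(2^{m+1})\text{ for all }j\}$, verifies (1) by pushing $W_i\subseteq U(2^{f(i)})$ up through Equation~(\ref{eq: ind}), proves (2) by exhibiting a word $w_0w_1w_2w_3\in V(2^{n+2})$ that would otherwise be forced into $U(2^{n+2})$, and inherits (4) from \cite[Theorem 5.1]{LSY}. Your treatment of (1)--(3) is fine: $J_i$ really is the largest homogeneous two-sided ideal inside the right ideal $\sum_k A(k2^{f(i)})W_iA$, the containment $I_0(2^n)\subseteq U(2^n)$ follows as you say, and Zorn's lemma applies. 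But the construction of $I$ matters only because of (4), and that is where your argument has a genuine gap.

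The gap is the containment $L(j)A(n-j)+A(j)R(n-j)\subseteq I(n)$, which you assert for \emph{every} $j$ and defer to ``tracing through \cite{LSY}.'' First, this per-$j$ claim would give $\dim(A(n)/I(n))\le\min_j\dim(L'(j))\dim(R'(n-j))$, which is far stronger than the stated sum; the actual statement proved in \cite{LS,LSY} is only $A(n)=I(n)+\sum_j L'(j)R'(n-j)$, and the sum over $j$ is essential because different words of length $n$ require different split points (governed by the binary expansion of $n$). Indeed, even for the paper's explicit $I$, an element $x\in L(j)$ with $2^p\le j<2^{p+1}$ only controls products $xA(2^{p+1}-j)\subseteq U(2^{p+1})$, and propagating this into $U(2^{m+1})A(2^{m+1})+A(2^{m+1})U(2^{m+1})$ via Equation~(\ref{eq: ind}) requires the left padding $A(a)$ to have degree a multiple of $2^{p+1}$ --- whereas membership in $I(n)$ demands control for \emph{all} $a$. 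So $L(j)A(n-j)\subseteq I(n)$ fails in general. Second, your proposed mechanism for establishing the containment --- show that adjoining these spaces to $I$ keeps every $V(2^m)\cap I=0$ and invoke maximality --- is not carried out, and it is at least as hard as the original problem: note that by property (8) and Case II of Proposition~\ref{key}, $L(2^m)$ contains a word of $V(2^m)$ whenever $m\notin T$, so $L(2^m)A(2^m)$ is not contained in $U(2^{m+1})$ and there is no soft reason why the two-sided ideal it generates should avoid the spaces $V(2^{m'})$. As it stands, part (4) --- the quantitative heart of the proposition --- is not proved.
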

\begin{proof} Let $n$ be a natural number and let $m$ be the unique nonnegative integer satisfying
$$2^m\le n < 2^{m+1}.$$
We define a subset $I(n)$ of $A(n)$ by declaring that 
$x\in I(n)$ if 
\begin{equation} A(j)xA(2^{m+2}-j-n)\subseteq U(2^{m+1})A(2^{m+1})+A(2^{m+1})U(2^{m+1})\end{equation}
for every $j\in \{0,1,\ldots , 2^{m+2}-n\}$.
We then define
\begin{equation}
I \ := \ \bigoplus_{n=1}^{\infty} I(n)\ \subseteq \ A.
\end{equation}
The fact that $I$ is a two-sided ideal of $A$ follows exactly as given in the proof of Lenagan and Smoktunowicz \cite[Theorem 5]{LS}.

To prove (1), we let $i$ and $n$ be a natural numbers and let $m$ be the unique nonnegative integer satisfying
$2^m\le n < 2^{m+1}.$  
Suppose that $J$ is a two-sided homogeneous ideal of $A$ satisfying
$$J\subseteq \sum_k A(k2^{f(i)})W_iA.$$  It is sufficient to show that if $x\in J$ is a nonzero homogeneous element of degree $n$, then
$$A(j)xA(2^{m+2}-j-n)\subseteq U(2^{m+1})A(2^{m+1})+A(2^{m+1})U(2^{m+1})$$ for every
$j\le 2^{m+2}-n$.  By assumption, $$A(j)xA(2^{m+2}-j-n)\subseteq J \subseteq \sum_k A(k2^{f(i)})W_iA.$$
Since every element of $A(j)xA(2^{m+2}-j-n)$ has degree $2^{m+2}$, we see that 
$$A(j)xA(2^{m+2}-j-n)\subseteq J \subseteq \sum_{k=0}^{2^{m+2-f(i)}-1} A(k2^{f(i)})W_iA(2^{m+2}-(k+1)2^{f(i)}).$$
By Property (5) of Proposition \ref{key}, we have
$$A(k2^{f(i)})W_iA(2^{m+2}-(k+1)2^{f(i)}) \subseteq A(k2^{f(i)})U_iA(2^{m+2}-(k+1)2^{f(i)})$$
for $k\in \{0,1,\ldots , 2^{m+2-f(i)}-1\}$.  
We now consider two cases.  If $k<2^{m+1-f(i)}$, then
\begin{eqnarray*}
&~& A(k2^{f(i)})U_iA(2^{m+2}-(k+1)2^{f(i)})\\
&=& A(k2^{f(i)})U_iA(2^{m+1}-(k+1)2^{f(i)})A(2^{m+1})\subseteq U(2^{m+1})A(2^{m+1}),
\end{eqnarray*}
by Equation (\ref{eq: ind}).  If $2^{m+1-f(i)}\le k < 2^{m+2-f(i)}$, then
\begin{eqnarray*}
&~& A(k2^{f(i)})U_iA(2^{m+2}-(k+1)2^{f(i)})
\\ &=& A(2^{m+1})A(k2^{f(i)} -2^{m+1})U_iA(2^{m+2}-(k+1)2^{f(i)})\subseteq A(2^{m+1})U(2^{m+1}),
\end{eqnarray*}
again by Equation (\ref{eq: ind}).  Thus we see that
$$\sum_{k=0}^{2^{m+2-f(i)}-1} A(k2^{f(i)})W_iA(2^{m+2}-(k+1)2^{f(i)})\subseteq U(2^{m+1})A(2^{m+1})+A(2^{m+1})U(2^{m+1}),$$
and so we see that $J\subseteq I$, as required.  

To obtain (2), let $n$ be a natural number.  We claim that $V(2^n)\not\subseteq I$. By Proposition \ref{key}, $V(2^n)$ is spanned by words over $x$ and $y$ and $V(2^{n+2})\subseteq V(2^n)^4$.  Moreover, if one examines the construction of $V(2^j)$ for a natural number $j$, we see that there are nonzero words $w_0,w_1,w_2,w_3$ over $x$ and $y$ in $V(2^n)$ such that
$w:=w_0w_1w_2w_3\in V(2^{n+2})$.  Suppose that $V(2^n)\subseteq I$.  Then $w_0\in I$ and so by our definition of $I$, $$w=w_0(w_1w_2w_3)\in U(2^{n+1})A(2^{n+1})+A(2^{n+1})U(2^{n+1})\subseteq U(2^{n+2}).$$
On the other hand, $w\in V(2^{n+2})$, by assumption, and so $w\in V(2^{n+2})\cap U(2^{n+2})=(0)$, a contradiction.  Thus we see that $V(2^n)\not\subseteq I$.

To see (3), note that if $I$ had finite codimension, then we would have $A(2^n)\subseteq I$ for some natural number $n$, which contradicts the fact that $V(2^n)\not\subseteq I$.  

The proof of (4) is identical to the proof given by Lenagan, Smoktunowicz and the second-named author \cite[Theorem 5.1]{LSY}.
\end{proof}
The final result we need is a growth estimate.
\begin{prop} \label{prop: est}
Assume the notation of the statement of Proposition \ref{key} and let $n$ be a natural number.  Then
$${\rm dim}(V(1)V(2)\cdots V(2^n)) \le  2^{2n} 2^{2^{g(1)}+\cdots + 2^{g(i)}}$$ and 
$${\rm dim}(V(2^n)V(2^{n-1})\cdots V(1)) \le  2^{2n} 2^{2^{g(1)}+\cdots + 2^{g(i)}},$$
where $i$ is the nonnegative integer satisfying $f(i) \le n <f(i+1)$.
\end{prop}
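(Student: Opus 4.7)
The plan is to proceed by induction on $n$, reducing the estimate to a product of dimensions. By Proposition \ref{key}(4), each $V(2^k)$ has a basis of words in $x$ and $y$; since concatenation is injective on distinct words in the free algebra $A=K\{x,y\}$, concatenations of basis words from each factor of $V(1)V(2)\cdots V(2^n)$ form a linearly independent spanning set. Hence
\[
\dim V(1)V(2)\cdots V(2^n) \;=\; \prod_{k=0}^n \dim V(2^k),
\]
and the task reduces to bounding this product.

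Partitioning $\{0,1,\ldots,n\}$ according to whether the index lies in $T$, Proposition \ref{key}(2)--(3) gives $\dim V(2^k)=2$ for $k\notin T$, and $\dim V(2^k)=2^{2^\ell}$ for $k=f(j)-g(j)-1+\ell$ in the interval associated with $j$. The $k\notin T$ factors contribute at most $2^{n+1}$ to the product; each fully-contained interval $j\in\{1,\ldots,i\}$ contributes $\prod_{\ell=0}^{g(j)}2^{2^\ell}=2^{2^{g(j)+1}-1}$; and if $n$ lies within the partial interval $i+1$ at position $p$, there is a partial contribution bounded by $2^{2^{p+1}-1}$.

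Summing the log-dimensions of these contributions and invoking the spacing condition (i) of Proposition \ref{key}, which iterates to $n\ge f(i)>\sum_{j=1}^i(g(j)+1)$, I will absorb the linear-in-$n$ and interval-count slack into the $2^{2n}$ factor, so that the remaining interval contributions combine to give the $2^{2^{g(1)}+\cdots+2^{g(i)}}$ factor of the stated bound. The most delicate step is handling the partial-interval contribution: since $2^{2^{p+1}-1}$ grows doubly exponentially in the position $p$, careful use of the fact that $n\ge f(i+1)-g(i+1)-1$ (forcing $n$ to grow at least linearly with $p$) is needed so that the $2^{2n}$ factor can accommodate this term.

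The companion bound for the reversed product $V(2^n)V(2^{n-1})\cdots V(1)$ follows from an identical argument: the word-basis structure of each $V(2^k)$ and the entire inductive bookkeeping are manifestly symmetric under reversing the order of multiplication, so the dimension formula $\prod\dim V(2^k)$ and the same upper estimate apply to the reversed product as well.
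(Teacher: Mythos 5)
Your overall route is the same as the paper's: bound $\dim(V(1)V(2)\cdots V(2^n))$ by the product $\prod_{k=0}^n\dim V(2^k)$ (your observation that equality actually holds, because the word bases concatenate to distinct words, is correct but not needed), then evaluate the product by splitting $\{0,\dots,n\}$ into the indices outside $T$, the full blocks $1,\dots,i$, and a possible partial block $i+1$. The trouble is that the two steps you defer are exactly the ones that do not go through as described. First, the full blocks do not ``combine to give the $2^{2^{g(1)}+\cdots+2^{g(i)}}$ factor'': their contribution is
\[
\prod_{j=1}^{i}2^{2^{g(j)+1}-1}\;=\;2^{\,2\left(2^{g(1)}+\cdots+2^{g(i)}\right)-i},
\]
essentially the \emph{square} of the target factor. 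The excess exponent $\bigl(2^{g(1)}+\cdots+2^{g(i)}\bigr)-i\ge 2^{g(i)}-i$ cannot be absorbed into the leftover slack $2^{2n}/2^{n+1}=2^{n-1}$, because the hypotheses only force $g(i)<f(i)\le n$, not $2^{g(i)}\le n$. What your computation actually yields is the bound with exponent $2^{g(1)+1}+\cdots+2^{g(i)+1}$.

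Second, the partial block genuinely cannot be absorbed into $2^{2n}$. Absorbing $2^{2^{p+1}-1}$ would require $2^{p+1}\lesssim n$, i.e.\ $n$ exponentially large in $p$, whereas condition (i) only gives $n=f(i+1)-g(i+1)-1+p>f(i)+p$, which is linear in $p$. Taking $p=g(i+1)$ with $g(i+1)$ large relative to $f(i)$, the single factor $\dim V(2^{f(i+1)-1})=2^{2^{g(i+1)}}$ already exceeds $2^{2n}2^{2^{g(1)}+\cdots+2^{g(i)}}$, so no bookkeeping rescues the stated inequality in this case; one must instead allow the exponent to include the partial sum $2^{p+1}$ from block $i+1$. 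For what it is worth, the paper's own proof runs the identical computation and is silent on both points: it omits the partial block from its product entirely, and it converts $\prod_{\ell=1}^{i}2^{2^{g(\ell)+1}}$ into $2^{i}\,2^{2^{g(1)}+\cdots+2^{g(i)}}$, which is the same doubling slip. So you have correctly located the load-bearing steps, but as written your proposal (like the paper's argument) proves only the weaker bound with doubled exponents, and neither argument covers the case in which $n$ lies inside a block of $T$.
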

\begin{proof} We pick $i$ such that $f(i) \le n <f(i+1)$.
Then
\begin{eqnarray*}
{\rm dim}(V(1)V(2)\cdots V(2^n))  &\le & \left( \prod_{j\not \in T} 2\right) \cdot \prod_{\ell=1}^i \prod_{a=0}^{g(\ell)} V(2^{f(\ell)-g(\ell)-1+a}) \\
&\le & 2^n \cdot  \prod_{\ell=1}^i \prod_{a=0}^{g(\ell)} 2^{2^a} \\
&\le & 2^n \cdot  \prod_{\ell=1}^i 2^{2^{g(\ell)+1}}\\
&=& 2^{n+i} 2^{2^{g(1)}+\cdots +2^{g(i)}}\\
&\le & 2^{2n} 2^{2^{g(1)}+\cdots +2^{g(i)}}.
\end{eqnarray*}
The other inequality follows in the same manner.
\end{proof}
\section{Construction}
In this section, we prove Theorem \ref{thm: main1}.  In order to do this, we will require a combinatorial lemma that allows us to get around the problems inherent in working over an uncountable base field.  

\begin{lem} Let $K$ be a field, let $n$, $p$, and $d$ be natural numbers, and let $W$ be a $d$-dimensional subspace of $K\{x,y\}$ spanned by $d$ non-trivial words over $x$ and $y$ whose lengths are uniformly bounded by $p$.   Then there is a subspace $Y$ of $A(n)$ whose dimension is at most $$(n+1)^d(2p^2)4^p$$ such that for all sufficiently large $j$, the ideal generated by all $j$th powers of elements in $W$ is contained in the right ideal
$$\sum_{k=0}^{\infty} A(k n) Y A.$$
\label{lem: smok}
\end{lem}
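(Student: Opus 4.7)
The plan is to construct $Y$ as the $K$-span of a controlled family of ``middle chunks'' of length $n$ that appear when a product $w_{i_1}\cdots w_{i_j}$ is cut between positions $kn$ and $(k+1)n$ inside a larger word $a\,w_{i_1}\cdots w_{i_j}\,b$. Writing a general element of $W$ as $w=c_1 w_1+\cdots+c_d w_d$, I would first expand
\[
 w^j=\sum_{|\mathbf{a}|=j} c^{\mathbf{a}}\,T_{\mathbf{a}},\qquad T_{\mathbf{a}}=\sum_{\mathbf{i}\,:\,\mathrm{type}(\mathbf{i})=\mathbf{a}} w_{i_1}\cdots w_{i_j}.
\]
Since $\sum_k A(kn)\,Y\,A$ is a $K$-subspace, it suffices by $K$-linearity to show that for every $a,b\in A$ and every multi-index $\mathbf{a}$ with $|\mathbf{a}|$ large, $a\,T_{\mathbf{a}}\,b\in\sum_k A(kn)\,Y\,A$.

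For homogeneous $a\in A(m)$, set $k=\lceil m/n\rceil$, so $kn\in[m,m+n)$. Provided $j$ is large enough that $L(\mathbf{a}):=\sum_i a_i\ell(w_i)\ge 2n$, the window $[kn,(k+1)n]$ inside $a\,w_{\mathbf{i}}\,b$ sits entirely inside the $w_{\mathbf{i}}$-factor, independently of $b$. Each monomial $w_{\mathbf{i}}$ thus admits a canonical decomposition into a prefix of length $kn$, a middle
\[
 M_{\mathbf{i}} \;=\; w_{i_\ell}[s:\ell(w_{i_\ell})]\,\cdot\,w_{i_{\ell+1}}\cdots w_{i_{\ell+t-1}}\,\cdot\,w_{i_{\ell+t}}[1:s']
\]
of length exactly $n$, and an arbitrary suffix; the degenerate case $t=0$ occurs when the window lies inside a single basis word.

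Next, I would partition the orderings $\mathbf{i}$ of type $\mathbf{a}$ by the tuple $(\mathbf{a}_P,i_\ell,s,\mathbf{a}_M,i_{\ell+t},s',\mathbf{a}_S)$, with $\mathbf{a}_P$, $\mathbf{a}_M$, $\mathbf{a}_S$ the types of the basis-word factors before $w_{i_\ell}$, strictly between $w_{i_\ell}$ and $w_{i_{\ell+t}}$, and after $w_{i_{\ell+t}}$, subject to $\mathbf{a}_P+\mathbf{e}_{i_\ell}+\mathbf{a}_M+\mathbf{e}_{i_{\ell+t}}+\mathbf{a}_S=\mathbf{a}$ together with the obvious length identities. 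Summing over orderings inside each class factors cleanly as
\[
 \bigl[a\,T_{\mathbf{a}_P}\,w_{i_\ell}[1:s-1]\bigr]\,\cdot\,\bigl[w_{i_\ell}[s:]\,T_{\mathbf{a}_M}\,w_{i_{\ell+t}}[1:s']\bigr]\,\cdot\,\bigl[w_{i_{\ell+t}}[s'+1:]\,T_{\mathbf{a}_S}\,b\bigr],
\]
whose three brackets lie in $A(kn)$, in $A(n)$ and in $A$ respectively. I then take $Y$ to be the $K$-span of all the middle brackets $w_i[s:]\,T_{\mathbf{a}_M}\,w_{i'}[1:s']$ (plus the single-word substrings from $t=0$). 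The generators are indexed by a multi-index $\mathbf{a}_M$ with each entry at most $n$ (at most $(n+1)^d$ choices); by a pair $(i,i')\in\{1,\ldots,d\}^2$, which using $d\le 2^{p+1}$ contributes at most $O(4^p)$; and by two offsets in $\{0,\ldots,p\}$ summing to $n-L(\mathbf{a}_M)$, at most $p(p+1)$ pairs. Combining and absorbing the small constants into $2p^2$ yields $\dim Y\le(n+1)^d(2p^2)4^p$.

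The main obstacle will be the careful bookkeeping in the partition step: verifying that every monomial of $T_{\mathbf{a}}$---including the edge cases where the window cuts exactly at a basis-word boundary or lies wholly inside a single basis word---is captured by the enumerated tuples, and that the parameter count really tightens to the claimed bound rather than a naively larger one. The pigeonhole-style choice of $k$ is straightforward once $j$ is large enough for the window to fit inside $w_{\mathbf{i}}$ alone.
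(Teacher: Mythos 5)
Your construction is correct and is essentially the paper's own argument: both rest on the single key observation that summing over all orderings of a fixed type collapses the exponentially many possible length-$n$ ``middle chunks'' into one symmetrized element (your $T_{\mathbf{a}_M}$, the paper's multinomial coefficient $C(i_1,\ldots,i_d)$ of $(w_1t_1+\cdots+w_dt_d)^m$), which is exactly what brings $\dim Y$ down to $(n+1)^d$ times factors depending only on $p$. The remaining differences are cosmetic --- the paper pads its symmetrized cores with arbitrary short words, taking $Y=\sum A(i)\,C(i_1,\ldots,i_d)\,A(j-i)$ with total padding $j<2p$ rather than the specific suffix and prefix of the two cut basis words, and organizes the verification as a two-stage left-to-right reduction of $a\,y^{2n}$ instead of one global partition of orderings --- and your slightly loose constant-factor bookkeeping for $\dim Y$ is harmless, since any such constant could be absorbed into the sparseness condition where the lemma is applied.
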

\begin{proof}
We write $W={\rm Span}\{w_1,\ldots ,w_d\}$, where $w_1,\ldots ,w_d$ are words over $x$ and $y$.  
For each natural number $m\le n$ and $j< 2p$, we let
$\mathcal{E}(m,j)$ denote the set of all sequences $(i_1,\ldots ,i_d)\in 
\left( \mathbb{Z}_{\ge 0}\right)^d$ 
with $$i_1+\cdots +i_d=m$$ and $$i_1|w_1|+\cdots + i_d|w_d|=n-j,$$ where $|w|$ denotes the length of a word $w$ over the alphabet $\{x,y\}$.  

Given $(i_1,\ldots ,i_d)\in \mathcal{E}(m,j)$, we let $C(i_1,\ldots ,i_d)\in A$ denote the coefficient of the term with monomial $t_1^{i_1}\cdots t_d^{i_d}$ in $(w_1 t_1+\cdots +w_d t_d)^m \in A[t_1,\ldots ,t_d]$. 
Then $C(i_1,\ldots ,i_d)$ is a homogeneous element of degree $n-j$ in $A$.

We then let 
$$Y:=\sum_{\stackrel{1\le m\le n}{j< 2p}} ~~ \sum_{i\le \max(j,p-1)} ~~ \sum_{(i_1,\ldots ,i_d)\in \mathcal{E}(m,j)} A(i)C(i_1,\ldots ,i_d)A(j-i).$$
Since there are at most $(n+1)^{d-1}$ elements of $\mathcal{E}(m,j)$ we see that the dimension of $Y$ is at most 
$$(n+1)^d(2p^2)4^p.$$
It remains only to show that there exists some $m$ such that 
$$Ay^mA \subseteq \sum_{k=0}^{\infty} A(k n) Y A$$ for all $y\in W$.  To see this, we take $m=2n$ and let
$y=\lambda_1 w_1+\cdots + \lambda_d w_d\in W$ with $(\lambda_1,\ldots ,\lambda_d)\in K^d$.  
To show that $$Ay^{2n}A \subseteq \sum_{k=0}^{\infty} A(k n) Y A,$$ it is sufficient to show that
$$ay^{2n} \in  \sum_{k=0}^{\infty} A(k n) Y A$$ whenever $a$ is a word over $x$ and $y$.  
We may also assume that $|a|<n$.  
Given a natural number $\ell$, we let $T_{\ell}$ denote the collection of all words of the form $w_{i_1}w_{i_2}\cdots w_{i_{2n}}$ with $i_1,\ldots ,i_{2n}\in \{1,\ldots ,d\}$ such that $$|a|+|w_{i_1}\cdots w_{i_{\ell-1}}|<n\le |a|+|w_{i_1}w_{i_2}\cdots w_{i_{\ell}}|.$$  Note that $T_{\ell}$ is empty if $\ell$ is greater than $n-1$ and that $\{T_1,\ldots ,T_{n-1}\}$ forms a partition of the set of words of the form $w_{i_1}w_{i_2}\cdots w_{i_{2n}}$; moreover, if $w_{i_1}w_{i_2}\cdots w_{i_{2n}}\in T_{\ell}$ then $|a|+|w_{i_1}\cdots w_{i_{\ell}}|<n+p$.

Given a word of the form $w=w_{i_1}\cdots w_{i_m}$, we define
\begin{equation}\lambda(w):=\lambda_{i_1}\cdots \lambda_{i_m}.
\end{equation}
Then
$$ay^{2n} = \sum_{\ell=1}^{n-1} \sum_{w_{i_1}\cdots w_{i_{2n}}\in T_{\ell}} \lambda(w_{i_1}\cdots w_{i_{\ell}}) aw_1\cdots w_{i_{\ell}} y^{2n-\ell}.$$
Note that for $w=w_{i_1}\cdots w_{i_{2n}}\in T_{\ell}$, we may write $aw_{i_1}\cdots w_{i_{\ell}}$ as $w_0u$ for some word $w_0$ of length $n$ and some word $u$ of length $b$ for some $b$ strictly less than $p$.
Then it is sufficient to show that $$uy^{2n-\ell}\in\sum_{k=0}^{\infty} A(k n) Y A$$ for $\ell<n$.

We fix $\ell<n$ and a word $u$ of length $b<p$.  We let 
$X_{j}$ denote the collection of all words of the form $w_{i_1}w_{i_2}\cdots w_{i_{2n-\ell}}$ 
with the property that 
$$|w_{i_1}\cdots w_{i_{j}}|<n-b\le |w_{i_1}w_{i_2}\cdots w_{i_{j+1}}|.$$ Then $X_j$ is empty if $j$ is greater than $n-b-1$, and $\{X_1,\ldots ,X_{n-b-1}\}$ form a partition of the set of all words of the form $w_{i_1}\cdots w_{i_{2n-\ell}}$.  

Then
$$uy^{2n-\ell} = \sum_{j=1}^n \sum_{w\in X_{j}} \lambda(w)uw.$$
Observe that if $w_{i_1}\cdots w_{i_{2n-\ell}} \in X_{j}$, then $w_{i_{\sigma(1)}}\cdots w_{i_{\sigma(2n-\ell)}} \in X_{j}$ for any permutation $\sigma\in S_{2n-\ell}$ that fixes all natural numbers $>j$. 
We say that two elements of $X_j$ are equivalent if there exists such a permutation relating the two elements.

We note that if $\mathcal{C}$ is the equivalence class in $X_j$ containing the word $w_{i_1}\cdots w_{i_{2n-\ell}}$, then
\begin{eqnarray*}
&~& \sum_{w \in \mathcal{C}} \lambda(w)uw\\
&=& \sum_{\sigma\in S_j} \lambda(w_{i_1}\cdots w_{i_j})uw_{\sigma(i_1)}\cdots w_{\sigma(i_j)}
\cdot \lambda(w_{i_{j+1}}\cdots w_{2n-\ell})w_{i_{j+1}}\cdots w_{2n-\ell}\\
&=&  uC(a_1,\ldots ,a_d)\lambda(w_{i_{j+1}}\cdots w_{2n-\ell})w_{i_{j+1}}\cdots w_{2n-\ell},
\end{eqnarray*}
where $a_m$ is the cardinality of the set $\{e\le j~:~i_e=m\}$; i.e., $a_m$ is the number of occurrences of $w_m$ in $w_{i_1}\cdots w_{i_j}$.   By construction, we see that
$\sum_{w \in \mathcal{C}} \lambda(w)uw\in YA$.  Summing over all equivalence classes, we see that $\sum_{w\in X_{j}} \lambda(w)uw\in YA$ and hence
$$uy^{2n-\ell} = \sum_{j=1}^n \sum_{w\in X_{j}} \lambda(w)uw\in YA.$$
The result follows.
\end{proof}

We are now ready to prove our main theorem.

\begin{proof}[Proof of Theorem \ref{thm: main1}] We let $A=K\{x,y\}$ and we let $\mathcal{S}$ denote the countable collection consisting of all finite subsets of finite words over $x$ and $y$.  Given a finite subset $S=\{w_1,\ldots ,w_d\}\in \mathcal{S}$, we define
$${\rm deg}(S):=\max_{1\le i\le d} {\rm length}(w_i).$$  We note that we can enumerate the elements of $\mathcal{S}$ and we let $S_1,S_2,\ldots $ be an enumeration.

We say that a weakly increasing function $f: \mathbb{N}\to\mathbb{N}$ is \emph{sparse} if
\begin{enumerate}
\item $f(1) > g(1) + 1$, and
\item for every natural number $i$, $f(i+1) > g(i+1) + f(i) + 1$ and $g(i+1) > g(i)$,
\end{enumerate}
where $g: \mathbb{N}\to\mathbb{N}$ is defined by
$$g(i) := \left\lceil \log_2 \log_2 \left((2^{f(i)} + 1)^{{\rm card}( S_i)}(2 \deg(S_i)^2)4^{\deg(S_i)} + 2\right)\right\rceil$$ for each natural number $i$.

It is straightforward to show that sparse sequences of arbitrarily rapid growth exist; indeed, if $f(1),\ldots ,f(j)$ are defined and satisfy conditions (1) and (2) for all $i<j$, then there exists a natural number $N_0$ such that if $f(j+1)>N_0$ then condition (2) is satisfied for $i=j$. 

The sparseness condition allows us to use Lemma \ref{lem: smok} to prove the existence of a subspace $W_i\subseteq A(2^{ f(i) } )$ of dimension at most $2^{2^{g(i)}} - 2$ and a natural number $j$ such that
every element $y$ in the span of $S_i$ satisfies
\begin{equation} \label{eq: nil}
Ay^jA\subseteq \sum_k A(k2^{f(i)})W_iA.\end{equation}
We choose a collection of subspaces $U(2^n)$ and $V(2^n)$ of $A(2^n)$ for each natural number $n$, which satisfy conditions (1)--(8) of Proposition \ref{key}, where we use the functions $f$, $g$, and the subspaces $W_i$ chosen above.  
 
By Lemma \ref{lem: 11} and Proposition \ref{prop: ideal}, there exist a homogeneous two-sided ideal $I$ 
and subspaces
$$L'(n), R'(n)\subseteq V(2^{i_1})V(2^{i_2})\cdots V(2^{i_d})$$
such that:
\begin{enumerate}
\item if $J$ is an ideal of $A$ satisfying $J\subseteq \sum_k A(k2^{f(i)})W_iA$ for some natural number $i$, then $J\subseteq I$;
\item $V(2^n)\not \subseteq I$ for each natural number $n$;
\item $I$ has infinite codimension;
\item for every natural number $n$, 
$${\rm dim}(A(n)/I(n)) \le \sum_{j=0}^n {\rm dim}(L'(j)){\rm dim}(R'(n-j)).$$
\end{enumerate}
Note that the first condition along with Equation (\ref{eq: nil}) implies that $A/I$ is algebraic over $K$ and in particular every element of $A$ in the homogeneous maximal ideal is nil mod $I$.  The one remaining issue is the growth $A/I$.

Let $m$ be such that $2^m\le n<2^{m+1}$.  
Then, using Proposition \ref{prop: est}, we have
\begin{eqnarray*}
{\rm dim}(A(n)/I(n)) &=& \sum_{j=0}^n {\rm dim}(L'(j)){\rm dim}(R'(n-j)) \\
&\le & \sum_{j=0}^n \left( {\rm dim}(V(1)V(2)\cdots V(2^m))\right)^2 \\
&\le & 2^{4m} \left(2^{2^{g(1)}+\cdots +2^{g(i)}}\right)^2\\
&\le & n^4  2^{2^{g(i)+2}},\\ 
\end{eqnarray*}
where $i$ satisfies $f(i) \le m <f(i+1)$.
By assumption,
$$2^{2^{g(i)+2}}= \left(2^{2^{g(i)-1}}\right)^8 \le  \left((2^{f(i)} + 1)^{{\rm card}( S_i)}(2 \deg(S_i)^2)4^{\deg(S_i)} + 2\right)^8.$$
Consequently,
\begin{eqnarray*}
{\rm dim}(A(n)/I(n)) &\le &
 n^4 \left((2^{f(i)} + 1)^{{\rm card}( S_i)}(2 \deg(S_i)^2)4^{\deg(S_i)} + 2\right)^8\\
 &\le &  n^4 \left((2^{2f(i)})^{{\rm card}( S_i)}(2 \deg(S_i)^2)4^{\deg(S_i)} \right)^8\\
&\le & n^4 2^{16m\cdot {\rm card} (S_i)}\left(\deg(S_i) 2^{\deg(S_i)+1}\right)^{16}\\
&\le & n^{4 + 16{\rm card} (S_i)}(\deg(S_i) 2^{\deg(S_i)+1})^{16}.
\end{eqnarray*}

Recall that $\alpha: [0,\infty)\to[0,\infty)$ is a weakly increasing function that tends to $\infty$. 
Since there exist sparse sequences of arbitrarily fast growth, we can select a sparse sequence $f(i)$ that satisfies the conditions:
\begin{enumerate}
\item $\alpha\left(2^{f(i)}\right) > 17{\rm card}( S_i)$;
\item $f(i) > (\deg(S_i) 2^{\deg(S_i)+1})^{16}$.
\end{enumerate}
Then we have
\begin{eqnarray*}
{\rm dim}(A(n)/I(n)) &\le& n^{4 + 16{\rm card} (S_i)}(\deg(S_i) 2^{\deg(S_i)+1})^{16}\\
&\le& n^{4 + 16 \cdot \alpha\left(2^{f(i)}\right)/17}f(i)\\
&\le& n^{4+ 16\alpha(2^m)/17}\cdot m \\
&\le&  n^{5 + 16\alpha(n)/17}
\end{eqnarray*}
Since $\alpha(n) \to \infty$ as $n\to\infty$, we see that $\dim A(n)/I(n) \le n^{\alpha(n)}$ for all sufficiently large $n$.  Letting $B=A/I$, we obtain the desired result.
\end{proof}
We note that this proof does not work if $\alpha(n)$ does not tend to $\infty$, since the limit supremum of the cardinalities of the sets $S_i$ is infinite and thus it would not be possible to choose a function $f$ satisfies the condition $\alpha(2^{f(i)}) > 17 {\rm card} (S_i)$.  This shows that we cannot obtain polynomially bounded growth by our methods.
\section{Concluding remarks and questions}
Much is now known about Kurosh's problem with growth restrictions.  There are, however, several very important questions that remain.  In this section, we give what we feel are the most important remaining questions, some of which appear in the work of Lenagan, Smoktunowicz, and the second-named author \cite{LSY}.

The first question in quite natural in view of Theorem \ref{thm: main1}. 
\begin{quest} Let $K$ be an uncountable field.  Does there exist a finitely generated algebraic $K$-algebra of finite Gelfand-Kirillov dimension?
\end{quest}
To answer this question, genuinely new techniques will be required.  As we have pointed out, there are certain subtleties which appear when working over an uncountable base field, which make this problem very difficult.

As for the countable case, much more is known.  There are, however, several important open questions.
\begin{quest} Does there exist a real number $\alpha>0$ such that for each $\beta\in [\alpha,\infty)$ there exists a finitely generated algebraic algebra of Gelfand-Kirillov dimension $\beta$?
\end{quest}
Currently, only upper bounds on the growth have been obtained.  There are many results in the literature that show that for many classes of algebra, one can find an element in the class whose Gelfand-Kirillov dimension is precisely $\beta$ for each $\beta\in [2,\infty)$ (see, for example, Vishne \cite{Vi}).

Bergman \cite[Theorem 2.5, p. 18]{KL} showed that Gelfand-Kirillov dimension has a gap: there do not exist algebras with Gelfand-Kirillov dimension strictly between zero and one.  In fact, he showed that if $V$ is a finite-dimensional vector subspace that generates a finitely generated algebra $A$ and contains $1$, then either there is a positive constant $C>0$ such that
$${\rm dim}(V^n) \ < \ Cn$$ for all $n\ge 1$, or $${\rm dim}(V^n)\ge {n+2\choose 2}$$ for every $n\ge 1$.  
In light of this result, the class of algebras of quadratic growth form a natural boundary between linear growth and super-linear growth.  We recall that a finitely generated algebra $A$ over a field $K$ has \emph{quadratic growth} if there are a finite-dimensional $K$-subspace $V$ of $A$ that contains $1$ and generates $A$ as a $K$-algebra
and positive constants $C_0, C_1>0$ such that
$$C_0 n^2 \ \le \ {\rm dim}(V^n) \ \le \ C_1 n^2$$ for every $n\ge 1$.  Algebras of quadratic growth form a well-behaved subclass of algebras of Gelfand-Kirillov dimension two.  We believe that there do not exist finitely generated infinite-dimensional algebraic algebras of quadratic growth, although we have no evidence to support this belief.  We thus pose the following question.
\begin{quest} Does there exist a finitely generated infinite-dimensional algebraic algebra of quadratic growth?
\end{quest} 
The class of algebras of Gelfand-Kirillov dimension two is much more pathological than quadratic growth (see, for example, the paper of the first-named author \cite{Be}).  We thus pose the following question separately.
\begin{quest}
Does there exist a finitely generated infinite-dimensional algebraic algebras of Gelfand-Kirillov dimension two?
\end{quest}
\section*{Acknowledgments}
We thank Lance Small and Efim Zelmanov for many helpful comments and suggestions.

\end{document}